\newtheorem{lem}{Lemma}
\newtheorem{thm}{Theorem}
\newtheorem{defn}{Definition}
\newcommand{\sX}{{\mathcal{X}}}
\newcommand{\sY}{{\mathcal{Y}}}
\newcommand{\sm}[0]{\operatorname{sm}}
\newcommand{\orb}[0]{\operatorname{orb}}
\newcommand{\orp}[0]{\operatorname{par}}
\newcommand{\sddbar}{{\sqrt{-1}\partial\bar{\partial}}}
\newcommand{\reg}{{\rm reg}}
\newcommand{\sing}{{\rm sing}}
\newcommand{\KE}{{\rm KE}}
\begin{document}

\title{Orbifold regularity of weak K\"{a}hler-Einstein metrics}
\author{Chi Li, Gang Tian}

\author{Chi Li\thanks{Supported partially by an NSF grant. Email: chi.li@stonybrook.edu}\\
Stony Brook University\\[5pt]
Gang Tian\thanks{Supported partially by
NSF and NSFC grants. Email: tian@math.princeton.edu}\\
Beijing University and Princeton University
}
\date{}
\maketitle

\section{Introduction}
In the resolution of the YTD conjecture on the existence of K\"{a}hler-Einstein metrics on Fano manifolds (see \cite{Tian12} and also \cite{CDS}), a crucial tool is a compactness result.
%There are two important aspects of this progress. The first concerns the study of conical K\"{a}hler-Einstein metrics and logarithmic K-stability.
%The other is about a
%compactness results for a sequence of (conical) K\"{a}hler-Einstein manifolds.
%In this note, we make a remarks about the regularity of the limit metric space.
%\section{Regularity on the orbifold locus}
%\section{Introduction}
%The resolution of Yau-Tian-Donaldson conjecture depends essentially on a compactness result.
In its simplest form, this result says that the Gromov-Hausdorff limit of a sequence of smooth K\"{a}hler-Einstein manifolds $(X_i, \omega_{i,\KE})$ is a normal Fano variety $X:=X_\infty$ with klt singularities and that there is a weak K\"{a}hler-Einstein metric $\omega_{\infty, \KE}^w$ on $X_{\infty}$. The existence of a Gromov-Hausdorff limit follows from Gromov's compactness theorem. So the important information in this statement is about the regularity of $X_\infty$. It was the second author (\cite{Tian90}, \cite{Tian09}, see also \cite{Li12a}) who first pointed out the route to prove that $X_\infty$ is an algebraic variety is to establish a so-called partial $C^0$-estimate. He demonstrated in \cite{Tian90} how to achieve this when the complex dimension $n$ is equal to $2$ by showing that a sequence of K\"{a}hler-Einstein surfaces converges to a Fano orbifold with a smooth orbifold K\"{a}hler-Einstein metric. Note that when $n=2$, klt singularities are nothing but quotient singularities or orbifold singularities. Two key ingredients to prove the partial $C^0$-estimate in dimension 2 are orbifold compactness result of Einstein 4-manifolds and H\"{o}rmander's $L^2$-estimates. 

Recently, Donaldson-Sun \cite{DS12} and the second author \cite{Tian13} generalized the partial $C^0$-estimate to higher dimensional K\"ahler-Einstein manifolds. Here they need to rely on compactness results of higher dimensional Einstein manifolds developed by Cheeger-Colding and Cheeger-Colding-Tian (see \cite{CCT} and the reference therein).  Compared to the complex dimension 2 case, the second author also conjectured that $\omega_{\infty, \KE}$ is a smooth orbifold metric away from analytic subvarieties of complex codimension 3. Note that in \cite{CCT}, it was proved that the (metric) singular set of $X_{\infty}$ has complex codimension at least 2.

It can be shown that, by partial $C^0$-estimate, there is a uniform $C^2$-estimate of the potential of $\omega^{w}_{\infty,\KE}$ on $X_{\infty}^{\reg}$. Then the Evans-Krylov theory or Calabi's 3rd derivative estimate allows one to show that $\omega_{\infty, \KE}^w$ is smooth on $X_{\infty}^{\reg}$ (see \cite{Tian90}, \cite{DS12}, \cite{Tian12}).
Alternatively using P\v{a}un's Laplacian estimate in \cite{Paun} and Evans-Krylov theory, Berman-Boucksom-Eyssidieux-Guedj-Zeriahi \cite{BBEGZ} %and Donaldson-Sun \cite{DS12} proved that
showed directly that any weak K\"{a}hler-Einstein metric $\omega_{\KE}^w$ on a klt Fano variety $X_{\infty}$ is smooth on $X_{\infty}^{\reg}$.
The purpose of this note is to answer the question by the second author about the regularity of $\omega^{w}_{\KE}$ on the orbifold locus $X_{\infty}^{\orb}$ of $X_{\infty}$.
First, if $(X, -K_X)$ is a klt Fano variety, then by \cite[Proposition 9.3]{GKKP} there exists a closed subset $Z\subset X$ with
$codim_XZ\ge 3$ such that $X\backslash Z$ has quotient singularities.
So we just need to show the following regularity result. For the definition of weak K\"{a}hler-Einstein metric, see Definition \ref{wkKE}.
\begin{thm}\label{orbreg}
Assume that $\omega_{\KE}^w$ is a weak K\"{a}hler-Einstein metric on $X_{\infty}$. Then $\omega_{\KE}^w$ is a smooth orbifold metric on $X_{\infty}^{\orb}$.
\end{thm}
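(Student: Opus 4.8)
The statement is local on the orbifold locus $X_\infty^{\orb}$. So fix a point $p \in X_\infty^{\orb}$. By definition there is an orbifold chart: a finite group $\Gamma \subset GL(n,\mathbb{C})$ acting on a ball $B \subset \mathbb{C}^n$ with a $\Gamma$-equivariant map $\pi: B \to U \subset X_\infty$ identifying $B/\Gamma \cong U$, where $U$ is a neighborhood of $p$. The goal is to show that the pullback $\pi^*\omega_{\KE}^w$ extends from $\pi^{-1}(U \cap X_\infty^{\reg})$ to a smooth $\Gamma$-invariant Kähler metric on all of $B$. Since we already know (by BBEGZ, quoted in the excerpt) that $\omega_{\KE}^w$ is smooth on $X_\infty^{\reg}$, and $\pi$ is an unramified covering over the regular locus, $\pi^*\omega_{\KE}^w$ is already a smooth Kähler-Einstein metric on $B \setminus \pi^{-1}(\mathrm{Sing})$; the content is to get smoothness across the ramification/exceptional set $D := \pi^{-1}(p)$ (more precisely the fixed-point locus of $\Gamma$), which has real codimension at least $2$ (complex codimension $\ge 1$), and in the worst cases exactly $2$.

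**Main steps.** First, I would reduce to a potential-theoretic statement: write $\pi^*\omega_{\KE}^w = \sddbar \varphi$ on the punctured chart for a $\Gamma$-invariant local potential $\varphi$; we need to show $\varphi$ extends to a smooth function on $B$. Since $\pi^*\omega_{\KE}^w$ has locally bounded potentials (weak K\"ahler-Einstein metrics have bounded potentials by the klt hypothesis and Kołodziej-type estimates, part of Definition \ref{wkKE}), $\varphi$ is a bounded $\sddbar$-psh function on $B$, hence extends to a bounded psh function on $B$ across the pluripolar set $\pi^{-1}(\mathrm{Sing})$. Second — and this is the crux — I would establish a uniform $C^0$ bound for the metric itself, i.e. $C^{-1}\omega_{\mathrm{euc}} \le \pi^*\omega_{\KE}^w \le C\,\omega_{\mathrm{euc}}$ near $D$. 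The lower bound is relatively soft (one can use the Kähler-Einstein equation $\mathrm{Ric} = \omega$ together with the bounded-potential property and a Schwarz-lemma/maximum-principle argument comparing with a fixed background orbifold metric on $B$, or invoke the partial $C^0$-estimate as the excerpt suggests). The upper bound is the P\u{a}un Laplacian estimate in \cite{Paun}: since the complex Monge-Amp\`ere equation $(\sddbar \varphi)^n = e^{-\varphi}\,(\text{smooth positive volume form})$ holds on $B$ with bounded right-hand side and $\varphi \in L^\infty$, P\u{a}un's estimate (which is a Laplacian estimate requiring only a lower bound on the bisectional curvature of a reference metric and boundedness of the potential, and is insensitive to the codimension-$\ge 1$ locus where things degenerate a priori) gives $\Delta_{\mathrm{euc}}\varphi \le C$ on $B$, hence $\mathrm{tr}_{\omega_{\mathrm{euc}}}(\pi^*\omega_{\KE}^w) \le C$. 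Combined with the lower bound this is a two-sided $L^\infty$ bound on the metric on all of $B$. Third, with a uniform $C^2$-bound on $\varphi$ in hand on $B$, the Monge-Amp\`ere equation becomes uniformly elliptic with bounded measurable coefficients, so the Evans-Krylov theorem upgrades $\varphi$ to $C^{2,\alpha}$ across $D$, and then the standard Schauder bootstrap using the smoothness of the right-hand side $e^{-\varphi}\cdot(\text{smooth form})$ gives $\varphi \in C^\infty(B)$. Finally, $\Gamma$-invariance of $\varphi$ (which holds on the punctured chart and persists under continuous extension) shows that $\pi^*\omega_{\KE}^w$ descends to the orbifold structure, so $\omega_{\KE}^w$ is a smooth orbifold metric at $p$.

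**The main obstacle.** The delicate point is justifying the global (on $B$) Laplacian estimate of P\u{a}un across the exceptional divisor $D$, i.e. verifying that the inequality $\Delta \varphi \le C$, a priori known only on $B \setminus D$, propagates across $D$. The clean way is: $\Delta_{\mathrm{euc}}\varphi$ is an $L^\infty_{\mathrm{loc}}$ function on $B \setminus D$ that is bounded above, and $D$ has real codimension $\ge 2$ hence zero capacity / is removable for bounded-above subharmonic-type functions, so combined with the fact that $\varphi$ itself is a bounded psh (hence $W^{1,2}_{\mathrm{loc}}$) function on $B$, one shows $\Delta\varphi$ (as a distribution on $B$) is represented by an $L^\infty$ function — this uses that a bounded psh function has no negative singular mass and the positive part is controlled by the a priori bound off $D$. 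Concretely, one runs P\u{a}un's pointwise maximum-principle argument for the quantity $u = \log \mathrm{tr}_{\omega_{\mathrm{euc}}}(\pi^*\omega_{\KE}^w) - A\varphi$ on $B\setminus D$; $u$ is bounded above near $D$ once we have a barrier, or alternatively one first proves the estimate on a smooth model obtained by resolving, running P\u{a}un's argument there with boundary terms controlled, and then descends. The secondary technical nuisance is the soft lower bound $\pi^*\omega_{\KE}^w \ge C^{-1}\omega_{\mathrm{euc}}$: one wants this without already assuming partial-$C^0$, so that the theorem is self-contained for an abstract klt Fano; here one can argue via the inequality $\mathrm{tr}_{\omega}(\omega_{\mathrm{euc}}) \le$ (something controlled by $\Delta_\omega$ of the bounded potential and a lower bisectional curvature bound of $\omega_{\mathrm{euc}}$) — again a P\u{a}un/Yau-type estimate — or, more simply, use that $(\pi^*\omega_{\KE}^w)^n$ is bounded below by a positive multiple of $\omega_{\mathrm{euc}}^n$ (from the explicit MA equation) together with the upper bound on the metric to get a lower bound on the smallest eigenvalue. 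Once the two-sided $C^0$ bound on the metric is in place, the rest is the standard Evans-Krylov/Schauder machinery and poses no difficulty.
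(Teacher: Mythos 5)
Your approach is local (orbifold charts near a point $p$), whereas the paper's proof is global via the orbifold partial resolution of Theorem~\ref{orbresolve}. You correctly identify the analytic ingredients — bounded potential, P\u{a}un's Laplacian estimate, Evans--Krylov, Schauder bootstrap — but the step you yourself flag as ``the main obstacle'' is in fact a genuine gap that your two proposed fixes do not close, and it is precisely the place where the paper's essential new input enters.

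P\u{a}un's Laplacian estimate is a maximum-principle argument on a \emph{compact} K\"ahler manifold (or orbifold). On a local ball $B$ the supremum of the test quantity $u=\log\mathrm{tr}_{\omega_{\mathrm{euc}}}(\pi^*\omega_{\KE}^w)-A\varphi$ can be attained on $\partial B$, or can fail to be attained at all because $\mathrm{tr}_{\omega_{\mathrm{euc}}}(\pi^*\omega_{\KE}^w)$ is a priori unbounded as one approaches $D$. Your fix~(a) (removability across $D$ of codimension~$\ge 2$ for bounded-above subharmonic-type quantities) presupposes exactly the bound $\Delta\varphi\le C$ near $D$ that one is trying to establish; P\u{a}un's bound is only on compacts of $B\setminus D$ and degenerates as one approaches the exceptional set. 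Your fix~(b), ``resolve and descend,'' fails for a different reason: a standard resolution $\tilde X\to X$ has exceptional divisor lying over \emph{all} of $X^{\sing}$, including $X^{\orb}\cap X^{\sing}$; P\u{a}un's estimate on $\tilde X$ therefore only yields regularity on $\tilde X\setminus E\cong X^{\reg}$ after descending — exactly the BBEGZ result, not the orbifold statement. To get regularity \emph{as an orbifold metric} over $X^{\orb}$ one needs a resolution whose exceptional locus avoids $X^{\orb}$, i.e.\ a birational morphism $f^{\orp}:X^{\orp}\to X$ from a compact K\"ahler orbifold that is an isomorphism over $X^{\orb}$. This is Theorem~\ref{orbresolve}, proved in the paper via Deligne--Mumford stack resolution, and it is what makes the compact maximum-principle argument — now run in the orbifold category on $X^{\orp}$ — legitimate, with the degenerate factors $|s_i|^{2a_i}$, $|\sigma_j|^{-2b_j}$ supported only over the non-orbifold part of $X^{\sing}$.

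Two smaller points. First, the paper's stated route is actually via the Monge--Amp\`ere flow of Song--Tian on $X^{\orp}$ and its immediate orbifold smoothing property, combined with the monotonicity/uniqueness argument of \cite{CTZ,BBEGZ}, rather than directly via P\u{a}un's elliptic estimate, although the authors note both routes work once the orbifold resolution is available. Second, for klt quotient singularities the local uniformizing group can be assumed to act freely in codimension one, so the ramification set $D$ in your chart has complex codimension $\ge 2$, not ``$\ge 1$, worst case exactly $2$'' as you wrote. Your instinct that a purely local, differential-geometric proof ought to exist is shared by the authors (they say so explicitly), but the proposal as written does not supply the missing local argument.
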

Our proof now uses the existence of an orbifold resolution, i.e., Theorem \ref{orbresolve} which is proved by algebraic method. However, we believe that
it is not necessary. There should be a purely differential geometric proof of
Theorem \ref{orbreg} which does not rely on Theorem \ref{orbresolve}. In a subsequent paper, we will analyze further structures of singularities of
higher codimension. We believe that our analysis can be used to yield a complete understanding of the singularity for any 3-dimensional weak K\"ahler-Einstein metrics.

\section{Regularity on the orbifold locus}

From now on we will denote by $X$ any $\mathbb{Q}$-Fano variety with klt singularities. Assume $\iota: X\rightarrow \mathbb{P}^N$ is an embedding given by the linear system $|-m K_X|$ for $m>0\in\mathbb{Z}$
sufficiently large and divisible.
Let $h_0=(\iota^*{h_{FS}})^{1/m}$ be the pull back of the Fubini-Study Hermitian metric $h_{FS}$ on $\mathcal{O}_{\mathbb{P}^N}(1)$ normalized to be a Hermitian metric
on $-K_X$. The Chern curvature form of $h_0$ is
\[
\omega_0=-\sddbar\log h_0
\]
which is a positive $(1,1)$-current on $X$. $\omega_0$ is a smooth positive definite $(1,1)$-form on $X^{\reg}$. However, on the singular locus $X^{\sing}$, $\omega_0$ in general is not canonically related to the local structure of $X$. Assume $p\in X^{\orb}$ is a quotient singularity. By this, we mean that there exists a small neighborhood $\mathcal{U}_p$ which is isomorphic to a quotient of a smooth manifold by a finite group. In other words, there exists a branched covering map
$\tilde{\mathcal{U}}_p\rightarrow \tilde{\mathcal{U}}_p/G\cong\mathcal{U}_p$.  The lifting of metric $\omega_0$ to the cover $\tilde{\mathcal{U}}_p$ in general is degenerate.

Now we define an adapted volume for on $X$ by
\[
\Omega=|v^*|_{h_0}^{2/m}(v\wedge \bar{v})^{1/m}.
\]
Here $v$ is any local generator of $\mathcal{O}(mK_X)$ and $v^*$ is the dual generator of $\mathcal{O}(-mK_X)$. The K\"{a}hler-Einstein equation
\begin{equation}
Ric(\omega_\phi)=\omega_\phi.
\end{equation}
can be transformed into a complex Monge-Amp\`{e}re equation:
\begin{equation}\label{CMA}
(\omega_0+\sddbar\phi)^n=%\frac{e^{-\phi}\Omega}{V^{-1}\int_Xe^{-\phi}\Omega}=
e^{-\phi}\Omega.
\end{equation}
\begin{defn}\label{wkKE}
A weak solution to the \eqref{CMA} is a bounded function $\phi\in L^{\infty}(X)\cap PSH(X,\omega)$ satisfying \eqref{CMA} in the sense of pluripotential theory.
\end{defn}
Let's first recall the method to prove the regularity of $\phi$ on $X^{\reg}$ following \cite{BBEGZ}. One first chooses a resolution $\pi: \tilde{X}\rightarrow X$ with simple normal crossing exceptional divisor $E=\pi^{-1}(X^{\sing})$ such that $\pi$ is an isomorphism over $X^{\reg}$. Then we can
pull back the equation \eqref{CMA} to $\tilde{X}$ and get:
\begin{equation}
(\pi^*\omega_0+\sddbar\psi)^n=e^{-\psi}\pi^*\Omega.
\end{equation}
On the other hand we can write:
\[
K_{\tilde{X}}=\pi^*K_{X}+\sum_{i=1}^ra_i E_i -\sum_{j=1}^s b_j F_j,
\]
such that $E=\cup_{i=1}^r E_i \bigcup \cup_{j=1}^s F_j$ and $a_i>0$, $b_j>0$.
The klt property implies: $a_i>0$, and $0<b_j<1$. Analytically, choosing a smooth K\"{a}hler metric $\eta$ on $\tilde{X}$, there exists $f\in C^{\infty}(\tilde{X})$ such that:
\[
\pi^*\Omega=e^{f}\frac{\prod_{i=1}^r |s_i|^{2a_i}}{\prod_{j=1}^s|\sigma_j|_{}^{2b_j}}\eta^n.
\]
where $s_i$ and $\sigma_j$ are defining sections of $E_i$ for $F_j$ respectively and $|s_i|^2$ and $|\sigma_j|^2$ are some fixed hermitian norms of them. So we have:
\begin{equation}\label{degMA}
(\pi^*\omega_0+\sddbar\psi)^n=e^{-\psi+f+\sum_{i}a_i \log |s_i|^2-\sum_j b_j\log |\sigma_j|^2}\eta^n=e^{\psi_{+}-\psi_{-}}\eta^n,
\end{equation}
Here we have denoted
\[
\psi_{+}=f+\sum_{i}a_i\log|s_i|^2, \quad \psi_{-}=\psi+\sum_{j}b_j\log|\sigma_j|^2.
\]
It's easy to see that they satisfy the quasi-plurisubharmonic condition:
\begin{equation}\label{quapsh}
\sqrt{-1}\partial\bar{\partial}\psi_{+}\ge -C \eta, \sqrt{-1}\partial\bar{\partial}\psi_{-}\ge -C\eta,
\end{equation}
for some uniform constant $C>0$.
To get Laplacian estimate of $\psi$ away from $Z=\pi^{-1}(X_\infty)$, we can first regularize \eqref{degMA} to
\begin{equation}\label{regMA}
(\omega_\epsilon+\sddbar\psi_{\epsilon})^n=e^{\psi_{+,\epsilon}-\psi_{-,\epsilon}}\eta^n.
\end{equation}
where $\omega_\epsilon=\pi^*\omega_0-\epsilon\theta_{E}$ is a K\"{a}hler metric on $\tilde{X}$, and $\psi_{\pm,\epsilon}\in C^{\infty}(\tilde{X})$ converges to $\psi_{\pm}$ in $L^p(\tilde{X})\cap L^{\infty}(\tilde{X}\backslash Z)$ for some $p>1$.
Using \eqref{quapsh} and cleverly modifying the $C^2$-estimate of Aubin-Yau-Siu,
P\v{a}un \cite{Paun} proved the Laplacian estimate for the solution $\psi_\epsilon$ away from $Z$. More precisely, for any compact set $K\subset \tilde{X}\backslash Z$, there exists a constant $A=A(\|\psi\|_{\infty},K)$, such that
\[
\Delta_{\eta}\psi_{\epsilon} \le A(\|\psi\|_{\infty}, K) e^{-\psi_{-}}.
\]
From this estimate, we know that the right-hand side of \eqref{regMA} is uniformly $C^{1,\alpha}$ % \eqref{deMA} is locally $C^{1,\alpha}$
on $\tilde{X}\backslash Z$. By Evan-Krylov's theory (\cite{Blocki}), we know that $\psi_{\epsilon}$ is uniformly $C^{2,\alpha}$ and hence by bootstrapping, $C^{k,\alpha}$ on $\tilde{X}\backslash Z$. Now because $\psi_{\epsilon}$ converges to $\psi$ in $C^{k}$ norm uniformly away from $Z$,
we get that $\psi$ is smooth on $\tilde{X}\backslash Z$.

One can also prove the regularity on $X^{\reg}$ with the help of K\"{a}hler-Ricci flow. Starting from the work in \cite{CTZ}, this idea has been used several times in the literature to prove the regularity of weak solutions to complex Monge-Amp\`{e}re equations. Recall that the K\"{a}hler-Ricci flow is a solution to the following equation:
\begin{equation}
\left\{
\begin{array}{l}
\frac{\partial\omega_t}{\partial t}=-Ric(\omega_t)+\omega_t;\\
\omega(0)=\omega_{\phi_0}.
\end{array}
\right.
\end{equation}
As in the elliptic case, this equation can be transformed into the following Monge-Amp\`{e}re flow
\begin{equation}\label{MAflow1}
\left\{
\begin{array}{l}
\frac{\partial \phi}{\partial t}=\log\frac{(\omega_0+\sddbar\phi)^n}{\Omega}+\phi;\\
\phi(0,\cdot)=\phi_0.
\end{array}
\right.
\end{equation}

To define a solution to this Monge-Amp\`{e}re flow on the singular variety $X$, % first choose a smooth resolution of singularities $\pi: \tilde{X}\rightarrow X$ with exceptional divisor $E$.
Song-Tian \cite{ST09} pulled up the flow equation in \eqref{MAflow1} to $\tilde{X}$ to get:
\begin{equation}\label{MAflowb}
\left\{
\begin{array}{l}
\frac{\partial \tilde{\phi}}{\partial t}=\log\frac{(\pi^*\omega_0+\sddbar\tilde{\phi})^n}{\pi^*\Omega}+\tilde{\phi};\\
\\
\tilde{\phi}(0,\cdot)=\pi^*\phi_0.
\end{array}
\right.
\end{equation}
\begin{thm}[\cite{ST09}]\label{ST09}
Let $\phi_0\in PSH_p(X,\omega_0)$ for some $p>1$. Then the Monge-Amp\`{e}re flow \eqref{MAflowb} on $\tilde{X}\backslash E$ has a unique solution $\tilde{\phi}\in C^{\infty}((0,T_0)\times\tilde{X}\backslash E)\cap C^0([0,T_0)\times\tilde{X}\backslash E)$ such that for all $t\in [0, T_0)$, $\tilde{\phi}(t,\cdot)\in L^{\infty}(\tilde{X})\cap PSH(\tilde{X},\pi^*\omega_0)$.
\end{thm}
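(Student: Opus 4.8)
\emph{Proof strategy.} The plan is to solve a family of non-degenerate parabolic complex Monge-Amp\`ere equations on the compact manifold $\tilde X$, prove a priori estimates that are uniform in the approximation parameter on compact subsets of $\tilde X\backslash E$ and on time intervals $[0,T_0)$, pass to the limit, and finally deduce uniqueness from a maximum principle on $\tilde X\backslash E$ combined with barriers along $E$. For the regularization, write $\pi^*\Omega=e^{f}\prod_i|s_i|^{2a_i}\big/\prod_j|\sigma_j|^{2b_j}\,\eta^n$ as in \eqref{degMA}, set $\omega_\epsilon=\pi^*\omega_0-\epsilon\theta_E$ (a K\"ahler metric on $\tilde X$ for $0<\epsilon\ll1$), replace the singular density by the smooth volume form $\Omega_\epsilon=e^{f}\prod_i(|s_i|^2+\epsilon)^{a_i}\big/\prod_j(|\sigma_j|^2+\epsilon)^{b_j}\,\eta^n$, and regularize the initial data by $\phi_{0,\epsilon}\in C^\infty(\tilde X)$ with $\pi^*\omega_0+\sddbar\phi_{0,\epsilon}>0$ decreasing to $\pi^*\phi_0$ in $L^p(\tilde X)$ and in $L^\infty_{loc}(\tilde X\backslash E)$ (Demailly--B\l ocki--Ko\l odziej regularization). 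Standard parabolic theory (linearizing the Monge-Amp\`ere operator and applying the inverse function theorem in parabolic H\"older spaces) gives a unique smooth solution $\phi_\epsilon$ of $\partial_t\phi_\epsilon=\log\big((\omega_\epsilon+\sddbar\phi_\epsilon)^n/\Omega_\epsilon\big)+\phi_\epsilon$ with $\phi_\epsilon(0)=\phi_{0,\epsilon}$ on a maximal interval, and a cohomological argument shows this interval contains $[0,T_0)$.

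Next I would establish, with constants independent of $\epsilon$: (i) a uniform bound $\sup_{[0,T_0)\times\tilde X}|\phi_\epsilon|\le C$, combining the maximum principle (comparison with solutions of scalar ODEs) with the parabolic Ko\l odziej-type estimate of Eyssidieux--Guedj--Zeriahi, where the klt condition $0<b_j<1$ enters through $\pi^*\Omega/\eta^n\in L^p$ for some $p>1$; (ii) bounds on $\dot\phi_\epsilon$: differentiating the flow in $t$ shows $\dot\phi_\epsilon$ solves a linear parabolic equation, and the maximum principle applied to $e^{-t}\dot\phi_\epsilon$ and to $\dot\phi_\epsilon-\phi_\epsilon$ gives $t$-dependent two-sided bounds, hence $0<c(t)\le(\omega_\epsilon+\sddbar\phi_\epsilon)^n/\Omega_\epsilon\le C(t)$ for $t$ bounded away from $0$; (iii) for each compact $K\subset\tilde X\backslash E$, a parabolic version of the Aubin--Yau--Siu second-order estimate in the form used by P\v{a}un --- absorbing the lower bounds \eqref{quapsh} on $\sddbar\psi_\pm$ into a cutoff supported away from $E$ --- yielding $\Delta_\eta\phi_\epsilon\le A(\|\phi_0\|_\infty,K,t)$ on $K\times[\tau,T_0)$ for every $\tau>0$; and (iv) uniform $C^{k,\alpha}_{loc}$ bounds on $(0,T_0)\times(\tilde X\backslash E)$, since once $\Delta_\eta\phi_\epsilon$ is locally bounded the right-hand side of the equation is uniformly $C^{1,\alpha}$ locally, so the parabolic Evans--Krylov theorem and Schauder bootstrapping apply.

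Passing to a subsequence, $\phi_\epsilon\to\tilde\phi$ in $C^\infty_{loc}((0,T_0)\times(\tilde X\backslash E))$ with $\tilde\phi$ a classical solution of \eqref{MAflowb} on $\tilde X\backslash E$, and the uniform $L^\infty$ bound together with $\pi^*\omega_0+\sddbar\phi_\epsilon\ge\omega_\epsilon$ gives $\tilde\phi(t,\cdot)\in L^\infty(\tilde X)\cap PSH(\tilde X,\pi^*\omega_0)$ for all $t$. For continuity down to $t=0$, comparing $\phi_\epsilon$ with solutions of the pointwise ODEs $\dot g=g+O(1)$ --- after controlling the oscillation of the volume ratio near $t=0$ by a Ko\l odziej estimate at fixed time --- gives $|\phi_\epsilon(t,x)-\phi_{0,\epsilon}(x)|\le Ct$, and letting $\epsilon\to0$, $t\to0$ and using $\phi_{0,\epsilon}\to\pi^*\phi_0$ locally uniformly on $\tilde X\backslash E$ yields $\tilde\phi\in C^0([0,T_0)\times(\tilde X\backslash E))$ with $\tilde\phi(0,\cdot)=\pi^*\phi_0$. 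For uniqueness, if $\tilde\phi_1,\tilde\phi_2$ are two such solutions then by concavity of $\log\det$ the function $u=e^{-t}(\tilde\phi_1-\tilde\phi_2)$ on $(\tilde X\backslash E)\times[0,T_0)$ satisfies a differential inequality to which the maximum principle applies formally; to make this rigorous on the non-compact $\tilde X\backslash E$ I would work with $u\pm\delta\sum_i\log|s_i|^2$, noting $\sum_i\log|s_i|^2\le0$, $\to-\infty$ on $E$, and $\sddbar\sum_i\log|s_i|^2\ge -C\eta$ on $\tilde X\backslash E$, apply the maximum principle, and let $\delta\to0$ using boundedness of the $\tilde\phi_k$; equivalently one pushes the solutions down to $X^{\reg}$ and invokes the comparison principle for bounded solutions of the complex Monge-Amp\`ere equation.

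The main obstacle is step (iii): obtaining the second-order estimate with explicit, $\epsilon$-independent control of the blow-up of the constant $A$ as $K$ approaches $E$ and as $t\to0$, since $\pi^*\omega_0$ degenerates along $E$ while $\pi^*\Omega$ has both zeros and poles there, so the P\v{a}un-type modification of Aubin--Yau--Siu must be run together with the klt inequalities $a_i>0$, $0<b_j<1$. The continuity at $t=0$ near $E$ and the rigorous implementation of the maximum-principle uniqueness argument on the open manifold $\tilde X\backslash E$ are the remaining technical points.
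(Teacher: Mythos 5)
The paper offers no proof of this statement: it is quoted verbatim from Song--Tian \cite{ST09}, so the only basis for comparison is the argument in that reference. Your outline does follow the same overall route as \cite{ST09} --- regularize the degenerate parabolic Monge--Amp\`ere equation on the resolution $\tilde X$ (smoothing both the reference form $\omega_\epsilon$, the density $\pi^*\Omega$, and the initial data), run uniform a priori estimates of order $0$, $1$ in $t$, and $2$ in space away from $E$, conclude by Evans--Krylov and Schauder, and prove uniqueness by a maximum principle with a $\log|s_E|^2$ barrier. The one place where your plan diverges from, and is weaker than, the actual argument is exactly the step you flag as the ``main obstacle'': the $\epsilon$-independent Laplacian estimate away from $E$. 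Song--Tian do not localize with a cutoff supported away from $E$ (which would force you to track how the constant degenerates as the cutoff approaches $E$, the difficulty you correctly identify). Instead they use Tsuji's barrier trick: one applies the maximum principle on all of the \emph{compact} manifold $\tilde X\times[\tau,T]$ to a quantity of the form $\log\operatorname{tr}_{\eta}\omega_{\phi_\epsilon}-A\phi_\epsilon+\lambda\log|s_E|^2_{}$ (with $|s_E|^2$ a fixed Hermitian norm of a defining section of $E$, so the barrier tends to $-\infty$ on $E$ and forces the maximum into $\tilde X\setminus E$, where the Chern--Lu/Aubin--Yau computation closes using the klt inequalities $a_i>0$, $0<b_j<1$ to control the sign of the curvature contributions). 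This yields $\operatorname{tr}_\eta\omega_{\phi_\epsilon}\le C\,|s_E|^{-2\mu}$ with $C,\mu$ independent of $\epsilon$, which is uniform on each compact subset of $\tilde X\setminus E$ and is precisely what your steps (iii)--(iv) need; the same barrier also rigorizes your uniqueness argument, as you anticipate. So your proposal is the right strategy with the key technical device left open; substituting the global barrier computation for the cutoff closes it and reproduces the proof in \cite{ST09}.
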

Since $\tilde{\phi}$ is constant along (connected) fibre of $\pi$, $\tilde{\phi}$ descends to a solution $\phi \in C^{\infty}((0,T_0)\times X^{\reg})\cap C^0([0, T_0)\times X^{\reg})$ of the Monge-Amp\`{e}re
flow \label{MAflow2}.

Now suppose $\omega^{w}_{\KE}=\omega_0+\sddbar\phi^{w}_{\KE}$ is a weak solution to the equation \eqref{CMA}.  If one can prove that the solution $\phi(t)$ to \eqref{MAflow1} with the initial condition $\phi(0)=\phi^w_{\KE}$ is stationary, then it follows from Theorem \eqref{ST09} that $\omega^{w}_{\KE}$ is smooth on $X_\infty^{\reg}$.  The idea to prove stationarity in \cite{CTZ} is to show that the energy functional is decreasing along the flow solution $\phi(t)$ and to use the uniqueness of weak K\"{a}hler-Einstein metrics. These are indeed true in the current case by the work of \cite{BBEGZ}.

To prove Theorem \ref{orbreg}, the main observation is that the above arguments can be used to prove the regularity of $\omega_{\KE}^{w}$ on $X^{\orb}$ as long as one can find a partial resolution by orbifolds: $\pi^{\orp}: X^{\orp}\rightarrow X$. Indeed, by the next section, there exist orbifold (partial) resolutions. If $\pi^{\rm par}: X^{\rm par}\rightarrow X$ is an orbifold resolution, then we can write:
\[
K_{X^{\rm par}}=(\pi^{\rm par})^* K_{X}+\sum_i^r {a_i}E_i-\sum_{j=1}^s b_j F_j,
\]
where $E=\cup_{i=1}^r E_i \bigcup \cup_{j=1}^s F_j $ is now a simple normal crossing divisor within orbifold category (in the sense of Satake \cite{Sat,Sat2}).  The klt property of $X$ again implies $a_i>0$ and $0<b_i<1$. Then the similar argument as in the proof of regularity of $\omega_{\KE}^w$ on $X_\infty^{\reg}$ carries over to the orbifold setting to prove the orbifold regularity of $\omega_{\KE}^w$ on $X^{\rm orb}$.
 %Indeed, we can use the above arguments as for the regular part because we have:
%\begin{prop}[{\cite[Proposition 5.20]{KM}}]
%Assume $\tilde{X}\rightarrow X$ is a index 1 canonical covering. $X$ is klt if and only if $\tilde{X}$ is klt.
%\end{prop}

Note that it was already observed in \cite[Section 4.3]{ST09} that if $X$ has only orbifold singularities, then the K\"{a}hler-Ricci flow smooths out initial metric to become genuine smooth {\it orbifold} metric immediately when $t>0$.

\section{Orbifold partial resolution}

The results in this section were communicated to us by Chenyang Xu.

\begin{lem}[Resolution of Deligne-Mumford stacks]\label{l-rdm} Let $\sX$ be an integral Deligne-Mumford stack which is of finite type over $\mathbb{C}$. Then there exists a birational proper representable morphism $g^{\sm}:\sX^{\sm}\to \sX$ from a smooth Deligne-Mumford stack $\sX^{\sm}$. Furthermore, we can assume that $g^{\sm}$ is isomorphic over the smooth locus of $\sX$, and the exceptional locus of $g^{\sm}$ is a normal crossing divisorial closed substacks of $\sX^{\sm}$.
\end{lem}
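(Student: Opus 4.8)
The plan is to deduce Lemma~\ref{l-rdm} from classical resolution of singularities applied to coarse moduli spaces and \'etale charts, together with a descent argument. First I would recall that any Deligne-Mumford stack $\sX$ of finite type over $\mathbb{C}$ admits an \'etale atlas: a finite collection of \'etale morphisms $U_\alpha\to\sX$ from schemes (in fact affine schemes), which we may take to be of finite type over $\mathbb{C}$. By work of Keel-Mori, $\sX$ has a coarse moduli space $X$, and \'etale-locally on $X$, $\sX$ is a quotient stack $[U/G]$ with $G$ finite. The strategy is therefore to resolve the stack functorially enough that the local resolutions on the quotient charts glue.

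The key steps, in order, are as follows. \textbf{(1)} Invoke a functorial (canonical, $G$-equivariant) resolution of singularities in characteristic zero — the algorithm of Bierstone-Milman or Villamayor, or Hironaka's theorem in the functorial form due to Włodarczyk and others. The crucial feature is that resolution commutes with smooth morphisms (in particular with \'etale morphisms) and with the action of any finite group; applied to an affine chart $U$ this produces $U^{\sm}\to U$ which is $\mathrm{Aut}$-equivariant and isomorphic over the smooth locus. \textbf{(2)} On a quotient chart $\sX|_{\mathcal{V}}\cong[U/G]$, form $[U^{\sm}/G]$; functoriality under the $G$-action makes this well defined, and gives a birational proper representable morphism $[U^{\sm}/G]\to[U/G]$. \textbf{(3)} Because resolution commutes with \'etale base change, the resolutions of two overlapping \'etale charts agree (up to canonical isomorphism) over the overlap; hence the local resolved stacks glue to a global smooth Deligne-Mumford stack $\sX^{\sm}$ with a morphism $g^{\sm}:\sX^{\sm}\to\sX$. \textbf{(4)} Check the asserted properties: representability follows since it can be checked \'etale-locally where $g^{\sm}$ is a morphism of schemes; properness and birationality likewise descend; the isomorphism over $\sX^{\sm}_{\sm}\to\sX_{\sm}$ is automatic from the analogous statement for the charts; and the exceptional locus is a normal-crossing divisorial substack because the resolution algorithm produces a normal-crossing exceptional divisor, compatibly with \'etale localization, so these local divisors glue.

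The main obstacle — and the reason the statement is nontrivial rather than a one-line citation — is step~(3): one must know not merely that resolution exists, but that it is \emph{functorial with respect to \'etale (smooth) morphisms and compatible with finite group actions}, so that the charts' resolutions are canonically identified on overlaps and descend. This is exactly the content of the strong functoriality theorems for embedded resolution in characteristic zero, and it is what lets the argument run for a genuine stack rather than a scheme. A secondary, more bookkeeping point is verifying that ``normal crossing divisorial closed substack'' is the right notion in the stacky sense (normal crossings on \'etale charts) and that the orbifold/Satake structure needed later is captured; but this is formal once the functorial local picture is in place. I would also note that since we ultimately only need the orbifold case — $\sX$ with quotient singularities by finite groups, as arises from $X^{\orb}$ — one could shortcut the general stack statement, but the proof above costs nothing extra and is cleanest phrased for all Deligne-Mumford stacks.
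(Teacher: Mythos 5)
Your proposal is correct and takes essentially the same route as the paper, whose entire proof is the one-line citation that Lemma~\ref{l-rdm} ``follows from the functoriality property of resolution of singularities'' (W\l odarczyk, Koll\'ar, Bierstone--Milman, Temkin). You have simply unpacked what that functoriality buys --- compatibility with \'etale/smooth base change and with finite group actions --- and how it lets the local resolutions on charts descend to the stack; this is exactly the content the paper's citation is meant to invoke.
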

\begin{proof}
This follows from the functoriality property of resolution of singularities (see \cite{Wl}, \cite{Kollar}, \cite{BM},\cite{Temkin}).
%To be added.
\end{proof}
\begin{lem}[Blow up the indeterminacy locus]\label{l-bldm} Let $X$ be a projective scheme. Let $\sX$ be a normal Deligne-Mumford stack with a dense open set $f_U:U\subset \sX$, such that $U$ admits a morphism $U\to X$. Then we can blow up an ideal $\mathcal{I}\subset \mathcal{O}_{\sX}$ to obtain a Deligne-Mumford stack $\tilde{\sX}$ such that $\tilde{\sX}\to \sX$ is isomorphic over $U$ and $f_U$ extends to a morphism $f: \tilde{\sX}\to X$.
\end{lem}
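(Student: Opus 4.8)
The plan is to resolve the indeterminacy in the classical way: pass to the closure of the graph of $f_U$, check that the resulting projective modification is an isomorphism over $U$ and carries the extended morphism to $X$, and then invoke the standard fact that a projective birational modification is a blowup of an ideal.

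\emph{Setup.} Since $X$ is projective over $\mathbb{C}$, I would fix a closed embedding $X\hookrightarrow\mathbb{P}^M$ with $\mathcal{O}_X(1)=\mathcal{O}_{\mathbb{P}^M}(1)|_X$ ample; in particular $X\to\operatorname{Spec}\mathbb{C}$ is proper, hence the projection $\operatorname{pr}_1\colon\sX\times X\to\sX$ is proper and $\operatorname{pr}_2^*\mathcal{O}_X(1)$ is $\operatorname{pr}_1$-ample. Write $j\colon U\hookrightarrow\sX$ for the open immersion, and let $\Gamma\subseteq\sX\times X$ be the scheme-theoretic image of $(j,f_U)\colon U\to\sX\times X$ (equivalently, the closure of the image with its reduced structure, since $U$ is reduced and $(j,f_U)$ is quasi-compact and quasi-separated). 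Then $\Gamma$ is a reduced closed substack of $\sX\times X$, in particular a Deligne--Mumford stack; set $p=\operatorname{pr}_1|_\Gamma\colon\Gamma\to\sX$ and $q=\operatorname{pr}_2|_\Gamma\colon\Gamma\to X$.

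\emph{First properties.} First I would verify that $p$ is an isomorphism over $U$ and is projective, and that $q$ extends $f_U$. For the isomorphism statement: scheme-theoretic image commutes with the flat base change $U\times X\hookrightarrow\sX\times X$, so $p^{-1}(U)=\Gamma\times_\sX U$ is the scheme-theoretic image of $(\operatorname{id},f_U|_U)\colon U\to U\times X$; but this morphism is a section of the separated morphism $\operatorname{pr}_1\colon U\times X\to U$, hence a closed immersion, so $p^{-1}(U)\cong U$ and $p$ restricts to an isomorphism over $U$ (which in particular forces $U$, being dense in $\sX$, to be dense in $\Gamma$). Since $\Gamma$ is closed in $\sX\times X$, the morphism $p$ is proper, and $\operatorname{pr}_2^*\mathcal{O}_X(1)|_\Gamma$ is $p$-ample, so $p$ is projective. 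Finally, over $U$ the morphism $q$ coincides with $f_U$ by construction, so $q\colon\Gamma\to X$ is the sought extension.

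\emph{Exhibiting a blowup.} It remains to realize $\Gamma$ as a blowup. Now $p\colon\Gamma\to\sX$ is a projective morphism of reduced Deligne--Mumford stacks which is an isomorphism over the dense open $U$, and I would quote the standard fact that such a morphism is the blowup of a finite-type ideal sheaf $\mathcal{I}\subseteq\mathcal{O}_\sX$ with $V(\mathcal{I})\cap U=\emptyset$ (classical for schemes, see e.g.\ \cite{Kollar}; the ideal $\mathcal{I}$ there is built canonically from a $p$-ample line bundle, e.g.\ from the Rees data of $p_*(\mathcal{M}^{\otimes d})$ for $\mathcal{M}=\operatorname{pr}_2^*\mathcal{O}_X(1)|_\Gamma$, so it can be constructed on an \'etale presentation of $\sX$ and glued, giving the same statement for Deligne--Mumford stacks). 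Then $\tilde\sX:=\Gamma\cong\mathrm{Bl}_{\mathcal{I}}\sX$; since $\tilde\sX\to\sX$ is projective it is representable, so $\tilde\sX$ is a Deligne--Mumford stack, it is an isomorphism over $U$ because $V(\mathcal{I})\cap U=\emptyset$, and $f_U$ extends to $f=q\colon\tilde\sX\to X$. The one genuinely nontrivial point is exactly this last step --- the representation of a projective birational modification as the blowup of an honest ideal, and its transposition to the stacky setting. I would note that when $\sX$ is moreover $\mathbb{Q}$-factorial one can write $\mathcal{I}$ down directly: with $L_U=f_U^*\mathcal{O}_X(1)$ and $\mathcal{F}=\operatorname{Im}(\mathcal{O}_\sX^{M+1}\to j_*L_U)$ a coherent sheaf with $\mathcal{F}|_U=L_U$, the reflexive hull $\mathcal{F}^{\vee\vee}$ is a line bundle, $\mathcal{F}\cong\mathcal{F}^{\vee\vee}\otimes\mathcal{I}$ for an ideal $\mathcal{I}$ with $V(\mathcal{I})\cap U=\emptyset$, and $\Gamma\cong\operatorname{Proj}_\sX\!\big(\bigoplus_d\operatorname{Im}(\operatorname{Sym}^d\mathcal{F}\to j_*L_U^{\otimes d})\big)\cong\mathrm{Bl}_{\mathcal{I}}\sX$, the last isomorphism because $\operatorname{Proj}$ of a graded algebra is unchanged by twisting with a line bundle from the base.
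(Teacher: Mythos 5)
Your approach is the graph-closure route: pass to the closure $\Gamma$ of the graph of $f_U$ in $\sX\times X$, check that $p\colon\Gamma\to\sX$ is a projective birational modification which is an isomorphism over $U$ and carries the second projection $q$ as the desired extension, and then invoke the general structure theorem (Hartshorne II.7.17, extended to DM stacks by \'etale descent) to identify $\Gamma$ with the blowup of an ideal. This is correct, and your verifications along the way (scheme-theoretic image commutes with flat base change, sections of separated morphisms are closed immersions, relative ampleness of $\operatorname{pr}_2^*\mathcal{O}_X(1)$) are the right ones.

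The paper takes a genuinely different route: it names the ideal directly --- after replacing $X$ by $\mathbb{P}^N$, it takes $D=f_U^{-1}(H)$ for a hyperplane $H$ not containing the image of $U$ and sets $\mathcal{I}$ to be the ideal of the closure $\overline{D}\subset\sX$, then cites Hartshorne II.7.17.3. The trade-off is clear: the paper's route, if it works, produces the ideal $\mathcal{I}$ explicitly and only needs the (more elementary) classical discussion of resolving a rational map to projective space by blowing up a base ideal, whereas your route requires the deeper structural fact that any projective birational modification of a noetherian integral base is a blowup, plus the transposition of that fact to the stacky setting --- which you correctly flag as the one nontrivial point, though your descent argument there is a bit compressed (the choice of twist making $p_*\mathcal{M}^{\otimes d}$ into an honest ideal sheaf, rather than merely a fractional ideal, is not canonical, and one should say a word about why it can be made on $\sX$ itself rather than only on an \'etale chart; this is where your $\mathbb{Q}$-factorial aside is doing real work). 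On the other hand, your route is arguably the safer one: as literally written, the paper's recipe (blow up the ideal of $\overline{D}$ for a single hyperplane pullback) need not resolve the indeterminacy --- e.g.\ for $\sX=\mathbb{A}^2$, $U=\mathbb{A}^2\setminus\{0\}\to\mathbb{P}^1$, $(x,y)\mapsto[x:y]$, the closure of $f_U^{-1}(H)$ is a line through the origin, already Cartier, so blowing it up does nothing; the intended ideal must be the full base ideal (generated by all the pulled-back coordinate sections), not the ideal of one hyperplane pullback. Your graph-closure construction avoids this pitfall automatically.
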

\begin{proof} We can replace $X$ by $\mathbb{P}^N$. %Let $L_U$ be the pull back of $\mathcal{O}(1)$ to $U$.
Let $D\subset U$ be the pull back of a hyperplane section $H$ which does not vanish along $U$, and we let $\mathcal{I}\subset \mathcal{O}_{\sX}$ be the ideal of the closure of $D$ in $\sX$. Then the rest of the proof follows from the cases for schemes as in \cite[II.7.17.3]{Har}.
\end{proof}

\begin{thm}\label{orbresolve}
Let $X$ be a quasi-projective normal variety. Let $X^{\orb}$ be the locus where $X$ only has orbifold singularity. Then there exists $f^{\orp}:X^{\orp}\to X$ a proper birational morphism, such that $X^{\orp}$ only has quotient singularity and $f^{\orp}$ is an isomorphic  over $X^{\orb}$.
\end{thm}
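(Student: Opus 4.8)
The plan is to equip the orbifold locus $X^{\orb}$ with its canonical smooth Deligne--Mumford stack structure, to extend this to a normal Deligne--Mumford stack that is proper and birational over $X$, to resolve the latter by Lemma~\ref{l-rdm}, and finally to pass to the coarse moduli space. The point is that the coarse space of a smooth separated Deligne--Mumford stack automatically has only quotient singularities, and that the resulting morphism to $X$ is an isomorphism over $X^{\orb}$ precisely because the resolution modifies nothing there.

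Concretely, I would first fix a normal projective compactification $X\subset\bar{X}$ and let $\bar{X}^{\orb}\subseteq\bar{X}$ be the open locus of quotient singularities, so that $\bar{X}^{\orb}\cap X=X^{\orb}$. Over $\bar{X}^{\orb}$ there is a \emph{canonical} (``no codimension-one isotropy'') smooth separated Deligne--Mumford stack $\sX^{\orb}$ with coarse moduli space $\bar{X}^{\orb}$; its coarse moduli morphism is an isomorphism over the smooth locus and the construction is compatible with restriction to open subsets. This is the genuinely orbifold-theoretic input, and it is unavoidable: resolving the \emph{scheme} $X$ directly (even functorially) necessarily alters $X$ over $X^{\orb}\setminus X^{\reg}$, whereas étale-locally $X^{\orb}$ is $\tilde{V}/G$ and $\sX^{\orb}$ is $[\tilde{V}/G]$, the correct smooth stacky model there.

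Next I would extend $\sX^{\orb}$ to a normal Deligne--Mumford stack that is proper over $\bar{X}$. The coarse moduli morphism $\sX^{\orb}\to\bar{X}^{\orb}$ is proper and quasi-finite, so $\sX^{\orb}\to\bar{X}$ is separated and quasi-finite; a Zariski-main-theorem / Nagata-type compactification for Deligne--Mumford stacks (this is where Lemma~\ref{l-bldm} enters, to clear the indeterminacy of the rational map to $\bar{X}$ after a first compactification) then produces, after normalization, a normal Deligne--Mumford stack $\overline{\sX}$, proper and quasi-finite over $\bar{X}$ and therefore with coarse moduli space $\bar{X}$, containing $\sX^{\orb}$ as a dense open substack. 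Using that $\sX^{\orb}\to\bar{X}^{\orb}$ is proper and $\overline{\sX}\to\bar{X}$ separated, one checks that $\sX^{\orb}$ is open, closed and dense in $\overline{\sX}\times_{\bar{X}}\bar{X}^{\orb}$, hence that $\overline{\sX}$ restricts to $\sX^{\orb}$ over $\bar{X}^{\orb}$. Over the locus of non-quotient singularities $\overline{\sX}$ may be badly singular as a stack, but this is immaterial.

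Finally, restricting over $X$ yields a normal Deligne--Mumford stack $\sX:=\overline{\sX}\times_{\bar{X}}X$ with coarse moduli space $X$ and with $\sX\times_X X^{\orb}$ equal to the canonical stack of $X^{\orb}$. Apply Lemma~\ref{l-rdm} to obtain a proper representable birational morphism $g\colon\sX^{\sm}\to\sX$ with $\sX^{\sm}$ smooth and $g$ an isomorphism over the smooth locus of $\sX$, hence over the open substack lying over $X^{\orb}$. Let $f^{\orp}\colon X^{\orp}\to X$ be the induced morphism on coarse moduli spaces, which exist as normal schemes proper and birational over $X$ by the Keel--Mori theorem. Since a smooth separated Deligne--Mumford stack over $\mathbb{C}$ is, étale-locally on its coarse space, of the form $[W/\Gamma]$ with $W$ smooth and $\Gamma$ finite, $X^{\orp}$ has only quotient singularities; and since formation of the coarse space commutes with the flat base change $X^{\orb}\hookrightarrow X$, and $\sX^{\sm}$ restricts over $X^{\orb}$ to the canonical stack whose coarse space is $X^{\orb}$ itself, the morphism $f^{\orp}$ is an isomorphism over $X^{\orb}$, as required. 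I expect the main obstacle to be exactly the middle step: installing the canonical stack structure over $X^{\orb}$ and extending it to a normal Deligne--Mumford stack over $X$ without disturbing $X^{\orb}$; once that is done, Lemmas~\ref{l-rdm} and \ref{l-bldm}, the Keel--Mori theorem, and the local structure theory of smooth Deligne--Mumford stacks make the rest formal.
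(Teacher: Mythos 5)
Your strategy follows essentially the same route as the paper's: install the canonical smooth Deligne--Mumford stack over the orbifold locus (Vistoli), compactify it to a proper Deligne--Mumford stack (Kresch/Nagata), clear the indeterminacy of the rational map to $X$ via Lemma~\ref{l-bldm}, resolve via Lemma~\ref{l-rdm}, and take the coarse moduli space via Keel--Mori, checking the final properties from the local $[W/\Gamma]$ structure of smooth Deligne--Mumford stacks with finite inertia and from compatibility of coarse space formation with flat base change. That is precisely what the paper does (citing Vistoli~2.8, Kresch~4.4 and~5.3, and Keel--Mori), so the proposal is correct in outline.

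There is, however, an internal tension in your middle step. You want $\overline{\sX}\to\bar{X}$ to be \emph{quasi-finite} (hence finite once proper, hence with coarse moduli space $\bar{X}$ after normalizing), but simultaneously you invoke Lemma~\ref{l-bldm}, whose blow-up of the indeterminacy locus introduces exceptional divisors lying over $\bar{X}\setminus\bar{X}^{\orb}$ with positive-dimensional fibers, so quasi-finiteness is lost. These two devices are mutually exclusive: either you use a Zariski-main-theorem factorization for the separated quasi-finite morphism $\sX^{\orb}\to\bar{X}$ (open immersion into a finite morphism, no blow-up, no Lemma~\ref{l-bldm}), or you take an arbitrary proper compactification and then blow up as in Lemma~\ref{l-bldm}, in which case the intermediate stack is only proper and birational over $\bar{X}$, not quasi-finite, and its coarse moduli space is a modification of $\bar{X}$, not $\bar{X}$ itself. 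The paper takes the second route and does not attempt to control the coarse moduli space of the intermediate stack at all: it just records the morphism $g\colon\sY\to X$ and that $g^{-1}(X^{\orb})\cong\sX^0$. Fortunately the rest of your argument (the flat base change check over $X^{\orb}$ and the quotient-singularity check for $X^{\orp}$) never uses the quasi-finiteness or the claim that $\overline{\sX}$ has coarse space $\bar{X}$, so your proof survives once that detour is dropped, and then it coincides with the paper's.
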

\begin{proof}After taking the closure of $X\subset \mathbb{P}^N$, we can assume $X$ is projective.

By \cite[2.8]{Vistoli89}, we know there is a smooth Deligne-Mumford stack $\sX^0$ whose coarse moduli space is $X^{\orb}$.
It follows from \cite[Theorem 4.4]{Kresch09} that $\sX^0=[Z/G]$  for some quasi-projective scheme $Z$ and linear algebraic group $G$. Actually, $Z$ can be taken as the frame bundle of $X^{\orb}$ and $G=GL_n(\mathbb{C})$. Then by \cite[Theorem 5.3]{Kresch09}, there is a proper Deligne-Mumford stack $\sX$, such that $\sX^0\subset \sX$ is a dense open set.
%Applying Lemma \ref{l-rdm}, we can assume $\sX$ is smooth.

Consider the rational map $f:\sX\dasharrow X$, by Lemma \ref{l-bldm} we know that there is a blow up $\sY \to \sX$ along the indeterminacy locus of $f$, such that there is a morphism $g:\sY\to X$. Moreover, by the construction, we know over $X^{\orb}$,
$$\sY^0:=g^{-1}(X^{\orb})\cong \sX^0. $$

By Lemma \ref{l-rdm}, we know that there is a smooth Deligne-Mumford stack $h:\sY^{\sm}\to \sY$, where $h$ is a representable proper birational morphism which is isomorphic over the smooth locus of $\sY$.  In particular, $h$ is isomorphic over $\sY^0$.

As $\sX$ has finite stabilizer and $\sY^{\sm}\to \sY \to \sX$ is proper, we know that $\sY^{\sm}$ has also finite stabilizer. Thus it follows from \cite{KM97} that $\sY^{\sm}$ admits a coarse moduli space, which we denote by $X^{\orp}$. It has a morphism $f^{\orp}:X^{\orp}\to X$ by the universal property. And we easily check that they satisfy all the properties.

\end{proof}

\noindent
{\bf Acknowledgement:}
We would like to thank Chenyang Xu for communicating the results in the last section to us. The first author would like to thank Professor J. Starr for discussions about stacks.

\end{document}